\documentclass{amsart}
\usepackage{amsmath}
\newtheorem{theorem}{Theorem}

\theoremstyle{definition}

\newcommand{\bbF}{\mathbb F}
\newcommand{\bF}{\bar{F}}
\newcommand{\bV}{\bar{V}}
\newcommand{\fF}{{\mathfrak F}} 
\newcommand{\p}{{\mbox{$[p]$}}}
\newcommand{\scp}{{\mbox{$\scriptstyle [p]$}}}
\DeclareMathOperator{\Hom}{Hom}
\DeclareMathOperator{\cl}{Cl}
\DeclareMathOperator{\eval}{Eval}
\title{On $\fF$-hypercentral modules and character clusters}
\author{Donald W. Barnes}
\address{1 Little Wonga Rd.\\Cremorne NSW 2090\\Australia\\}
\email{D.Barnes@maths.usyd.edu.au}

\subjclass[2010]{Primary 17B30, 17B50}
\keywords{modular Lie algebras, saturated formations}

\begin{document}

\begin{abstract} Let $\mathfrak F$ be a saturated formation of soluble Lie algebras over a field $F$ of characteristic $p > 0$  and let ${\mathbb F}_p$ denote the field of $p$ elements.  Let $(L,[p])$ be a restricted Lie algebra over $F$ with $z^{\mbox{$\scriptstyle [p]$}}=0$ for all $z$ in the centre of $L$. Let $S \in \mathfrak F$, $S\ne 0$ be a subnormal subalgebra of $L$.  Let $V, W$ be  $L$-modules. Suppose that the character cluster of $W$ is contained in the set of ${\mathbb F}_p$-linear combinations of the characters in the character cluster  of $V$.  Suppose that $V$, regarded as $S$-module, is $\mathfrak F$-hypercentral. Then $W$, regarded as $S$-module, is also $\mathfrak F$-hypercentral.
\end{abstract}

\maketitle

Let $F$ be a field of characteristic $p>0$ and let $\fF$ be a saturated formation of soluble Lie algebras over $F$.  Let $(L,\p)$ be a restricted Lie algebra over $F$ with $z^\scp=0$ for all $z$ in the centre of $L$.  Suppose that $S \in \fF$, $S \ne 0$ is a subnormal subalgebra of $L$.  If  the $L$-module $V$ is a \p-module, then by \cite[Theorem 6.4]{extras}, $V$ is $S\fF$-hypercentral, that is, $\fF$-hypercentral as $S$-module.  As an $L$-module is a \p-module if and only if it has character $0$, this establishes a link between characters and $\fF$-centrality.  

An $S\fF$-hypercentral $L$-module need not have character $0$.  Not every $L$-module has a character.  To cope with this situation, the concept of the character cluster $\cl(V)$ of $V$ was defined in \cite[Section 2]{cluster} to be the set of characters of the composition factors of the module $\bV$ obtained by extending  the field to its algebraic closure $\bF$.  Since for any saturated formation $\fF$, the trivial $1$-dimensional module is $S\fF$-central and has character $0$, the following theorem is a generalisation of the  theorem cited above.   We denote the field of $p$ elements by $\bbF_p$ and the set of $\bbF_p$-linear combinations of the elements of $\cl(V)$ by $\bbF_p\cl(V)$.

\begin{theorem} \label{main} Let $\fF$ be a saturated formation of soluble Lie algebras over a field $F$ of characteristic $p > 0$. Let $(L,\p)$ be a restricted Lie algebra over $F$ with $z^\scp=0$ for all $z$ in the centre of $L$. Suppose that $S \in \fF$, $S \ne 0$ is a subnormal subalgebra of $L$. Let $V, W$ be $L$-modules with $\cl(W) \subseteq \bbF_p\cl(V)$ and suppose that $V$ is $S\fF$-hypercentral.  Then $W$ is $S\fF$-hypercentral.  
\end{theorem}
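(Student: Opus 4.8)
The plan is to pass to the algebraic closure, reduce to a single composition factor of $W$, cancel its character against a piece manufactured from $V$ so that the hypothesis character $0$ of the cited theorem is met, and finally recover that factor as a submodule of an $S\fF$-hypercentral module.

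First I would extend the ground field to $\bF$, relying on the facts that $\cl(V)$ and $\cl(W)$ are by definition computed over $\bF$ and that $S\fF$-hypercentrality is preserved under extension of the base field; so it suffices to treat the case $F=\bF$. I would then recall three standard features of the $\fF$-hypercentre: the $S$-composition factors of $W$ are exactly those occurring among the $S$-composition factors of the $\bar L$-composition factors of $W$; an $L$-module is $S\fF$-hypercentral if and only if each of its $S$-composition factors is $\fF$-central; and submodules of $S\fF$-hypercentral modules are again $S\fF$-hypercentral. It is therefore enough to prove that every $\bar L$-composition factor $P$ of $W$ is $S\fF$-hypercentral.

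Fix such a $P$ with character $\omega$. By hypothesis $\omega=\sum_i c_i\chi_i$ with $\chi_i\in\cl(V)$ and $c_i\in\{0,\dots,p-1\}$; set $c=\sum_i c_i$. Characters add under tensor products — the $p$-centre acts through the coproduct, so $x^p-x^\scp$ acts on $M\otimes N$ by $\chi_M(x)^p+\chi_N(x)^p=(\chi_M+\chi_N)(x)^p$ — and they negate under duals. Hence each $-\chi_i$ lies in $\cl(\bV^*)$, the element $-\omega$ is a sum of $c$ such characters, and there is an $\bar L$-composition factor $Q$ of $(\bV^*)^{\otimes c}$ of character $-\omega$. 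Then $P\otimes Q$ has character $0$, so by \cite[Theorem 6.4]{extras} applied over $\bF$ it is $S\fF$-hypercentral; moreover $Q^*$, of character $\omega$, is an $\bar L$-composition factor of $\bV^{\otimes c}$, and the coevaluation $\bF\to Q\otimes Q^*$ embeds $P\cong P\otimes\bF$ as an $L$-submodule of $P\otimes Q\otimes Q^*=(P\otimes Q)\otimes Q^*$.

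The one substantial ingredient is the closure property that a tensor product of two $S\fF$-hypercentral $L$-modules is again $S\fF$-hypercentral, and I expect this to be the main obstacle: because $S\fF$-hypercentral modules need not have character $0$, it cannot be read off from the base theorem alone, and its proof should rest on the saturation of $\fF$ (with $z^\scp=0$ entering through the applicability of Theorem~6.4). Granting it, tensor powers of $V$ are $S\fF$-hypercentral, so $\bV^{\otimes c}$ is $S\fF$-hypercentral and hence so is its composition factor $Q^*$; then $(P\otimes Q)\otimes Q^*$ is a tensor product of two $S\fF$-hypercentral modules and is $S\fF$-hypercentral, and therefore so is its submodule $P$. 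Since $P$ was an arbitrary $\bar L$-composition factor of $W$, we conclude that $W$ is $S\fF$-hypercentral.
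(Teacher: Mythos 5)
The genuine gap is your opening reduction to $F=\bF$. The formation $\fF$ lives over $F$, and $\fF$-centrality --- hence $S\fF$-hypercentrality --- is defined only for modules over $F$: no formation over $\bF$ is given, $\bF\otimes S$ is not an element of $\fF$, and you cite no result allowing hypercentrality to ascend from $V$ to $\bV$ or to descend from $\bar{W}$ to $W$. You in fact need both directions: ascent to make $\bV^{\otimes c}$ hypercentral and to apply \cite[Theorem 6.4]{extras} and the tensor-closure theorem over $\bF$ (both are stated over $F$, for $S\in\fF$), and descent to recover that $W$ itself is $S\fF$-hypercentral from the hypercentrality of the $\bar{L}$-composition factors of $\bar{W}$. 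Your bridging claim that the $S$-composition factors of $W$ are ``exactly'' those occurring in the $\bar{L}$-composition factors is also false as stated: an irreducible over $F$ can split over $\bF$ into irreducibles admitting no $F$-form, so comparing centrality across the extension is precisely what would need proof, not a standard feature to be recalled.

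The paper's proof is engineered to avoid exactly this. All formation-theoretic arguments stay over $F$: after replacing $V$ by $\oplus_{r=1}^{k(p-1)}V^{\otimes r}$ --- hypercentral by the same tensor-closure theorem \cite[Theorem 2.1]{HyperC} that you correctly isolated as the crux, and with $\cl(X)=\bbF_p\cl(V)$ playing the role of your sums $\sum_i c_i\chi_i$ --- one may assume $\cl(W)\subseteq\cl(V)$ and $W$ irreducible. Then $0\in\cl(\Hom(V,W))$, so the homogeneous \emph{$F$-linear} system $(\rho(x)^p-\rho(x^\scp))f=0$, $x\in L$, has a nonzero solution over $\bF$ and therefore a nonzero solution over $F$; the algebraic closure enters only to certify solvability of a linear system, which descends for free. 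This yields a nonzero submodule $H\subseteq\Hom(V,W)$ of character $0$, to which \cite[Theorem 6.4]{extras} applies over $F$; then $V\otimes H$ is $S\fF$-hypercentral and maps onto the irreducible $W$ under evaluation. Your coevaluation embedding $P\hookrightarrow(P\otimes Q)\otimes Q^*$ is a correct mirror image of this evaluation argument --- the character cancellation, the appeal to Theorem 6.4 at character $0$, and the identification of tensor-closure as the key lemma all match the paper --- but it is executed on the wrong side of the field extension. To repair it, either prove a base-change theorem for $S\fF$-hypercentrality (a substantial undertaking) or rerun your cancellation inside $\Hom$-modules over $F$ as above, after which the rest of your outline goes through.
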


\begin{proof} Suppose that $\cl(V)$ has $k$ elements.  Put $X = \oplus_{r=1}^{k(p-1)} V^{\otimes r}$.  From \cite[Theorem 5.2.7(3)]{SF}, it follows that $\cl(V^{\otimes r}) = \{c_1+ \dots +c_r \mid c_i \in \cl(V) \}$ and so, that $\cl(X) = \bbF_p\cl(V)$.  By \cite[Theorem 2.1]{HyperC}, $X$ is $S\fF$-hypercentral.  By replacing $V$ with $X$, we may suppose that $\cl(W) \subseteq \cl(V)$.  We need only consider the case where $W$ is irreducible. 

 Suppose that the character $c \in \cl(W)$.  Then also $c \in \cl(V)$.  Denote the action of $x \in L$ on $\Hom(V,W)$ by $\rho(x)$.  By \cite[Theorem 5.2.7(1)]{SF}, $0 =c-c \in \cl(\Hom(V,W))$.  Thus the set $\{(\rho(x)^p - \rho(x^\scp)) f = 0 \mid x \in L\}$ of linear equations over $F$ has a non-zero solution for $f$ in $\bF \otimes \Hom(V,W)$, so it has a non-zero solution in $\Hom(V,W)$.  Thus $\Hom(V,W)$ has a non-zero submodule $H$ with character $0$.  The evaluation map $\eval: V \otimes H \to W$ given by $\eval(v,f) = f(v)$ has non-zero image.  As $\eval$ is an $L$-module homomorphism and $W$ is irreducible, we have $\eval(V \otimes H) =W$.  By \cite[Theorem 6.4]{extras}, $H$ is an $S\fF$-hypercentral $L$-module.  As $V$ is $S\fF$-hypercentral, by \cite[Theorem 2.1]{HyperC},  $V \otimes H$ is $S\fF$-hypercentral.  It follows that $W$ is $S\fF$-hypercentral.
\end{proof}

Suppose that we have $S \in \fF$, $S \ne 0$ and $S$-modules $V,W$ with $V$ $\fF$-hypercentral.  We can take any $p$-envelope $(L,\p)$ of $S$ and, if necessary, adjust the $p$-operation to have $z^\scp=0$ for all $z$ in the centre of $L$.  We then have that $S$ is an ideal of $L$.  Using the construction in the proof of  \cite[Proposition 2.5.6]{SF}, we can extend the actions of $S$ on $V,W$ making them into $L$-modules.  (This extension of the action is set out in detail in \cite[Section 4]{induced}.)  Suppose that for some choice of $(L,\p)$ and some choices of the extensions of the actions, we have that $\cl(W) \subseteq \bbF_p\cl(V)$.  Then it follows by Theorem \ref{main}, that $W$ also is $\fF$-hypercentral.

\bibliographystyle{amsplain}

\end{document}